\newcommand{\K}{\mathcal{K}}
\shorttitle{Couplings and repulsiveness}
\pgfplotsset{compat=1.5}
\begin{document}
	\title{Couplings for determinantal point processes and their reduced Palm distributions with a view to quantifying repulsiveness}

	\authorone[Aalborg University]{Jesper M{\O}ller}
	\authortwo[California Institute of Technology]{Eliza O'Reilly}
	
	\addressone{Department of Mathematical Sciences, Aalborg University, Skjernvej 4A, 9220 Aalborg {\O}st, Denmark.}
	\addresstwo{Computing and Mathematical Sciences,
	   California Institute of Technology,
           1200 E. California Blvd. MC 305-16,
	   Pasadena, CA 91125,
           USA.}
	
\begin{abstract}
 	%\textcolor{red}{For a determinantal point process $X$ with a kernel $K$ whose spectrum is strictly less than one, Andr{\'e} Goldman has established a coupling to its reduced Palm process $X^u$ at a point $u$ with $K(u,u)>0$ so that in distribution $X^u$ is obtained by removing a finite number of points from $X$. The intensity function of the difference $X\setminus X^u$ is known, but apart from special cases the distribution of $X\setminus X^u$ is unknown. Considering the restriction $X_S$ of $X$ to any compact set $S$, we establish a coupling of $X_S$ and its reduced Palm process $X^u_S$ so that the difference is at most one point. Specifically, we assume $K$ restricted to $S\times S$ is either (i) a projection or (ii) has spectrum strictly less than one. In case of (i), we have in distribution that $X^u_S$ is obtained by removing one point from $X_S$, and we can specify the distribution of this point. In case of  (ii), in distribution we obtain $X^u_S$ either by moving one point in $X$ or by removing one point from $X_S$, and to a certain extent we can describe the distribution of these points. We discuss how Goldman's and our results can be used for quantifying repulsiveness in $X$.}
For a determinantal point process $X$ with a kernel $K$ whose spectrum is strictly less than one, Andr{\'e} Goldman has established a coupling to its reduced Palm process $X^u$ at a point $u$ with $K(u,u)>0$ so that almost surely $X^u$ is obtained by removing a finite number of points from $X$. We sharpen this result, assuming weaker conditions and establishing that $X^u$ can be obtained by removing at most one point from $X$, where we specify the distribution of the difference $\xi_u:=X\setminus X^u$. This is used for discussing the degree of repulsiveness in DPPs in terms of $\xi_u$, including Ginibre point processes and other specific parametric models for DPPs.

\end{abstract}

\keywords{Ginibre point process;
globally most repulsive determinantal point process;
isotropic determinantal point process on the sphere;
globally most repulsive determinantal point process;
projection kernel;
stationary determinantal point process in Euclidean space.}

\msc{60G55; 60J20}{60D05; 62M30} %; 68U20}

\section{Introduction}\label{s:introduction}

Determinantal point processes (DPPs) have been of much interest over the last many years in mathematical physics and probability theory (see e.g.\ \cite{BorodinOlshanski,Hough:etal:09,Macchi:75,shirai:takahash:03,Soshnikov:00} and the references therein) and more recently in other areas, including statistics \cite{LMR15,MR2016}, machine learning \cite{KuleszaTaskar}, signal processing \cite{Dengetal}, and neuroscience \cite{Snoeketal}. They are models for regularity/inhibition/repulsiveness, but there is a trade-off between repulsiveness and intensity \cite{LMR12extended,LMR15} (or see Section~\ref{s:4.1.1}). This paper sheds further light on this issue by studying various couplings between a DPP and its reduced Palm distributions. In particular, we relate our results to the definition of most repulsive stationary DPPs on $\mathbb R^d$ as specified in \cite{LMR15,BiscioLavancier}. However, our results will be given for DPPs in a general setting as given below.

%Section~\ref{s:background} provides the background material needed in this paper: 
Section~\ref{s:def} provides
our general setting for a DPP
 $X$ defined on a locally compact Polish space $\Lambda$ and specified by a so-called kernel $K:\Lambda \times\Lambda \to\mathbb{C}$ 
which satisfies certain mild conditions given in Section~\ref{s:assumptions}. 
Also, for any $u \in \Lambda$ with $K(u,u)>0$,  if $X^u$ follows the reduced Palm distribution of $X$ at $u$ -- intuitively, this is the conditional distribution of $X\setminus\{u\}$ given that $u\in X$ -- then $X^u$ is another DPP; Section~\ref{s:reduced Palm} provides further details. Furthermore, Section~\ref{s:goldman} discusses Goldman's  \cite{Goldman} result that if for any compact set $S\subseteq\Lambda$, denoting $K_S$ the restriction of $K$ to $S\times S$, we have that the spectrum of $K_S$ is $<1$, then 
 $X$ stochastically dominates $X^u$  %(in the sense of Section~\ref{s:stochastic domination}) 
and hence by Strassen's theorem there exists a coupling so that almost surely $X^u\subseteq X$. The difference $\kappa_u:=X\setminus X^u$ is a finite point process with a known intensity function. A straightforward calculation also gives that the mean number of points in $\kappa_u$ is at most 1, see equations \eqref{e:Ku} and \eqref{e:Kintbnd}.
In particular, for a standard Ginibre point process
 \cite{Ginibre}, which is a special case of a DPP in the complex plane, Goldman showed that $\kappa_u$ consists of a single point which follows $N_\mathbb{C}(u,1)$, the complex Gaussian distribution with mean $u$ and unit variance. Section~\ref{s:Pemantle} then discusses some related coupling results due to Pemantle and Peres \cite{Pemantle11}. One of their results implies that in the specific case where $\Lambda$ is finite, there exists a coupling of $X$ and $X^u$ such that $X^u \subseteq X$ and the difference is at most one point. %\eodelete{However, apart from this and other special cases, the distribution of $\kappa_u$ is unknown.} \eoadd{
However, apart from these and other special cases, the distribution of $\kappa_u$ has not been fully characterized.

%Considering the restriction $X_S$ of $X$ to any compact set $S\subseteq \Lambda$, we 
Section~\ref{s:main} shows that these results can be extended: %\eodelete{more can be said: Under weaker conditions than in Goldman's paper,} 
For a DPP $X$ on any locally compact Polish space $\Lambda$, there is a coupling such that almost surely
$X^u\subseteq X$, $\xi_u:=X\setminus X^u$ consists of at most one point, and the distribution of $\xi_u$ can be specified. Note that $\kappa_u$ and $\xi_u$ share the same intensity function, but Goldman did not establish that $\kappa_u$ consists of at most one point.
As in \cite{Goldman} we only verify the existence of our coupling result. We leave it as an open research problem to provide a specific coupling construction or simulation procedure for $(X,X^u)$ (restricted to a compact subset of $\Lambda$), hence extending the simulation algorithm for a DPP \cite{Hough:etal:06,LMR12extended,LMR15,MR2016}.
%We leave it as an open research problem to provide a specific coupling construction or simulation procedure for $(X,X^u)$ (restricted to a compact subset of $\Lambda$); possibly this may provide a faster simulation algorithm than in \cite{Hough:etal:06,LMR12extended,LMR15,MR2016}.

Section~\ref{s:discussion} discusses how our coupling result can be used for describing the repulsiveness in a DPP, including when considering the notion of a globally most repulsive DPP by which we mean that for all $u\in\Lambda$ with $K(u,u)>0$, almost surely
$\xi_u$ has one point. 
For example, if $X$ is a standard Ginibre point process, we obtain a similar result as in \cite{Goldman}: $X$ is a globally most repulsive DPP and the point in $\xi_u$ follows $N_\mathbb{C}(u,1)$. 
In particular, we show that our definitions extend those given in \cite{LMR15,BiscioLavancier}   
for stationary DPPs on $\mathbb{R}^d$, and by considering the distribution of the point in $\xi_u$, we demonstrate how the range of repulsion can differ with DPPs that have the same intensity and the same global repulsiveness. Moreover, we consider the cases of a finite set $\Lambda$ and when we have a stationary DPP defined on $\Lambda=\mathbb R^d$. Finally, we compare with globally most repulsive isotropic DPPs on $\mathbb{S}^d$, the $d$-dimensional unit sphere in $\mathbb{R}^{d+1}$, as studied in \cite{Moelleretal18}.

\section{Background}\label{s:background}

This section provides the background material needed in this paper.

\subsection{Setting}\label{s:setting} 

Below we give the definition of a DPP, specify our assumptions, and recall that the reduced Palm distribution of a DPP is another DPP.

\subsubsection{Definition of a DPP}\label{s:def}

Let $X$ be a point process defined on a locally compact Polish space $\Lambda$ equipped with its Borel $\sigma$-algebra $\mathcal B$ and a Radon measure $\nu$ which is used as a reference measure in the following.
%\footnote{Do we need a metric? So that "Polish space" means "Polish metric space" or equivalently "complete separable metric space".} 
 We assume that $X$ is a DPP with kernel $K$ which by definition means the following. 
First, $X$ has no multiple points, so dependent on the context we view $X$ as a random subset of $\Lambda$ or as a random counting measure, and we let $X(B)$ denote the cardinality of $X_B:=X\cap B$ for $B\in\mathcal B$. Second, $K$ is a complex function defined on  
 $K:\Lambda^2\mapsto\mathbb C$. Third, 
for any $n=1,2,\ldots$ and any mutually disjoint bounded sets $B_1,\ldots,B_n\in\mathcal B$,  
\[\mathrm E\left[X\left(B_1\right)\cdots X\left(B_n\right)\right]=\int_{B_1\times\cdots\times B_n}
\det\left\{K\left(u_i,u_j\right)\right\}_{i,j=1}^n
\,\mathrm d\nu^n\left(u_1,\ldots,u_n\right)\]
is finite, 
where $\nu^n$ denotes the $n$-fold product measure of $\nu$. 
This means that 
$X$ has $n$-th order intensity function $\rho(u_1,\ldots,u_n)$ (also sometimes in the literature called $n$-th order correlation function) given by the determinant
\begin{equation}\label{e:defDPP}
\rho\left(u_1,\ldots,u_n\right)=\det\left\{K\left(u_i,u_j\right)\right\}_{i,j=1}^n,\qquad  u_1,\ldots,u_n\in\Lambda,
\end{equation}
and this function is locally integrable. 
In particular, $\rho(u)=K(u,u)$ is the intensity function of $X$,
and when $B\in\mathcal B$ is bounded almost surely $X_B$ is finite.

In the special case where $K(u,v)=0$ whenever $u\not=v$, the DPP $X$ is just a Poisson process with intensity function $\rho(u)$ conditioned on that there are no multiple points in $X$ (if $\nu$ is diffuse, it is implicit that there are no multiple points). For other examples 
when $\Lambda$ is a countable set and $\nu$ is the counting measure, see \cite{KuleszaTaskar}; when $\Lambda=\mathbb R^d$ and $\nu$ is the Lebesgue measure, see \cite{Hough:etal:09,LMR15}; 
and when 
$\Lambda=\mathbb S^d$ (the $d$-dimensional unit sphere) and $\nu$ is the surface/Lebesgue measure, see \cite{Moelleretal18}. Examples are also given in Section~\ref{s:examplesDPP}. 

%Russ' comment about the Poisson process caused that I moved the first lines in Section 3 to be the following paragraph.
From \eqref{e:defDPP} and 
the fact that the determinant of a complex covariance matrix is less than or equal to the product of its diagonal elements
we obtain that 
\[\rho\left(u_1,\ldots,u_n\right)\le\prod_{i=1}^n\rho\left(u_i\right),\]
where the equality holds if and only if $X$ is a Poisson process.
%\textcolor{blue}{
Thus, apart from the case of a Poisson process, the counts $X(A)$ and $X(B)$ are negatively correlated whenever $A,B\in\mathcal{B}$ are disjoint.
%\textcolor{red}{This inequality shows that DPPs are indeed repulsive. }

\subsubsection{Assumptions}\label{s:assumptions}

We always make the following assumptions (a)--(c):  
\begin{enumerate}
\item[(a)] $K$ is Hermitian, that is, $K(u,v)=\overline{K(v,u)}$ for all $u,v\in\Lambda$;
\item[(b)] $K$ is locally square integrable, that is,  for any compact set $S\subseteq\Lambda$, the double integral $\int_S \int_S |K(u,v)|^2\,\mathrm d\nu(u)\,\mathrm d\nu(v)$ is finite; 
%Russ' comments did not indicate that the change in (b) (the addition of the word "locally") should be done but in fact it is all we need.
\item[(c)] $K$ is of locally trace class, that is, for any compact set $S\subseteq\Lambda$, the integral $\int_S K(u,u)\,\mathrm d\nu(u)$ is finite.
\end{enumerate}
%where indeed (c) was already assumed in Section~\ref{s:def}.
%Russ' comment about the Poisson process caused a reformulation of the following two sentences.
By Mercer's theorem, excluding a $\nu^2$-nullset, 
this ensures the existence of a spectral representation for the kernel restricted to any compact set $S\subseteq\Lambda$:
%Then, % $\nu(S) < \infty$,  and   
Ignoring a $\nu^2$-nullset, we can redefine $K$ on $S\times S$ by 
\begin{align}\label{e:Mercer}
K(u,v) = \sum_{k=1}^{\infty} \lambda_k^S \phi_k^S(u) \overline{\phi_k^S(v)}\qquad  u,v\in S,
\end{align}
where the eigenvalues $\lambda_k^S$ are real numbers and the eigenfunctions $\phi_k^S$ constitute an orthonormal basis of $L^2(S)$,   cf. Section~4.2.1 in \cite{Hough:etal:09}. 
%Russ' comments did not indicate a need for the addition of the following sentence, so this is something I have added.
Here, for any $B\in\mathcal B$, $L^2(B)=L^2(B,\nu)$ is the space of square integrable complex functions w.r.t.\ $\nu$ restricted to $B$. 
%Here, we suppress in the notation the dependence of $S$. , $0$ is the only possible accumulation point of the eigenvalues, and $\sum_{k=1}^\infty\lambda_k^S<\infty$ 
%Russ' comments did not cause me to move "We denote $K$ restricted to $S\times S$ by $K_S$" to the beginning of the second paragraph below; I just did it because it is more natural.
Note that (c)  means $\mathrm E X(S)=\sum_{k=1}^{\infty} \lambda_k^S<\infty$. 
Thus,  when $B\in\mathcal B$ is bounded, almost surely $X_B$ is finite.
%Russ' comment about the Poisson process caused the addition of the following sentence.
When $\nu$ is diffuse, as we are redefining $K$ by \eqref{e:Mercer} we have effectively excluded the special case of the Poisson process (i.e.\ when $K$ is 0 off the diagonal) because all the eigenvalues in \eqref{e:Mercer} are then 0; however, as shown later, it will still make sense to consider the Poisson process when quantifying repulsiveness in DPPs. 

We also always assume that  
\begin{enumerate}
\item[(d)] for any compact set $S\subseteq\Lambda$, all eigenvalues satisfy $0\le\lambda_k^S\le 1$.
\end{enumerate}
% In comparison, Goldman \cite{Goldman} assumed that each eigenvalue is strictly less than one. 
In fact, under (a)--(c), the existence of the DPP with kernel $K$ is equivalent to (d) (see e.g.\ Theorem~4.5.5 in \cite{Hough:etal:09}), and the DPP is then unique (Lemma~4.2.6 in \cite{Hough:etal:09}). If $\Lambda=\mathbb R^d$, $\nu$ is the Lebesgue measure, and $K(u,v)=K_0(u-v)$ is stationary, where $K_0\in L^2(\mathbb R^d)$ and $K_0$ is continuous, 
we denote the Fourier transform of $K_0$ by $\hat K_0$. Then 
(d) is equivalent to $0\le\hat K_0\le1$ (Proposition~3.1 in \cite{Hough:etal:09}). 

%Russ' comments did not cause the addition of text at the beginning of the following sentence, cf.\ my previous comment.
Recalling that $K_S$ is the restriction of $K$ to $S\times S$, we sometimes consider one of the following conditions:
\begin{enumerate}
\item[(e)] For a given compact set $S\subseteq\Lambda$, $K_S$ is a projection of finite rank $n$. 
%Thus, without loss of generality, we can assume
%\begin{equation*}%%\label{e:spectral projection}
%K(v,w)=\sum_{k=1}^n\phi^S_k(v)\overline{\phi^S_k(w)},\qquad v,w\in S.
%\end{equation*}
\item[(f)] 
 For all compact $S\subseteq\Lambda$, all eigenvalues satisfy that $\lambda_k^S<1$.
\end{enumerate}

\subsubsection{Reduced Palm distributions}\label{s:reduced Palm}

Consider an arbitrary point $u\in \Lambda$ with $\rho(u)>0$. Recall that the reduced Palm distribution of $X$ at $u$ is a point process $X^u$ on $\Lambda$ with $n$-th order intensity function
\[\rho^u(u_1,\ldots,u_n)=\rho(u,u_1,\ldots,u_n)/\rho(u).\]
This combined with \eqref{e:defDPP} 
easily shows that $X^u$ is a DPP with kernel
\begin{equation}\label{e:Ku}
K^{u}(v,w)=K(v,w)-\frac{K(v,u)K(u,w)}{K(u,u)}\qquad v,w\in\Lambda,
\end{equation}
see Theorem~6.5 in \cite{shirai:takahash:03}.
% (more precisely, the reduced Palm distributions are uniquely given by such kernels for $\nu$ almost all $u\in \Omega$ with $K(u,u)>0$). 
%Also recall Schur's determinant identity:
%\begin{align}\label{e:det_identity}
%\det \left(\begin{array}{cc} A & B \\ C & D\end{array} \right)=\det(D)\det\left(A-B D^{-1}C\right)
%\end{align}
%provided $D$ is invertible.
For any compact set $S\subseteq\Lambda$, it follows that the restriction $X^u_S:=X^u\cap S$ follows the reduced Palm distribution of $X_S$ at $u$.

\subsection{Goldman's results}\label{s:goldman}

Goldman \cite{Goldman} made similar assumptions as in our assumptions (a)-(d), and in addition he assumed condition (f) throughout his paper. Two of his main results were the following.

\begin{enumerate}
\item[(G1)]
For any $u\in\Lambda$ with $K(u,u)>0$, there is a coupling of $X$ and $X^u$ so that almost surely $X^u\subseteq X$. 
\item[(G2)] Suppose $X$ is a standard Ginibre point process, that is, the DPP on $\Lambda=\mathbb C\equiv\mathbb R^2$, with $\nu$ being Lebesgue measure, and with kernel 
\begin{equation}\label{e:Ginibre-standard-kernel}
%K_{\mathrm{Gin}}
K(v,w)=\frac{1}{\pi}\exp\left(v\overline{w}-\frac{|v|^2+|w|^2}{2}\right),\qquad v,w\in\mathbb C.
\end{equation}
%Russ' comment about a typo caused me to correct $X^u\setminus X$ to $X\setminus X^u$ in the following sentence.
Then, for the coupling in (G1) and any $u\in\mathbb C$, $X\setminus X^u$ consists of a single point which follows $N_\mathbb{C}(u,1)$.
\end{enumerate}

It follows from (G1) and \eqref{e:Ku} that $\kappa_u:=X\setminus X^u$ is a finite point process with intensity function
\begin{equation}\label{e:rhokappau}
\rho_{\kappa_u}(v)={|K(u,v)|^2}/{K(u,u)},\qquad v\in\Lambda.
\end{equation} 
Note that the standard  Ginibre point process is stationary and isotropic with intensity $1/\pi$,  
 but its kernel is not of the form $K(u,v) = K_0(\|u-v\|)$. In accordance with (G2), combining \eqref{e:Ginibre-standard-kernel} and \eqref{e:rhokappau}, $\rho_{\kappa_u}$ is immediately seen to be the density of $N_\mathbb{C}(u,1)$.

\subsection{Pemantle and Peres' results}\label{s:Pemantle}

%\eoadd{Pemantle and Peres \cite{Pemantle11} studied probability measures on $\{0,1\}^n$ satisfying a negative dependence property called the strong Rayleigh property. This class of probability measures was introduced in \cite{Liggett09}, where it was also shown that determinantal point processes on a finite set satisfy this property. In \cite{Pemantle11}, the authors define a notion called stochastic covering and the stochastic covering property, which can be defined as follows. Letting $X$ and $Y$ be point processes, $X$ is said to \textit{cover} $Y$ if there is a coupling $(X,Y)$ such that $X = Y$ or their difference is one point. Now, consider a point process $X$ on a finite set $\Lambda = \{1, \ldots, n\}$ and consider the probability measure $\nu$ on $\{0,1\}^n$ induced by $X$. Then, $X$ is said to have the the stochastic covering property if for all pairs of subsets $S \subseteq \{1, \ldots, n\}$ and $x, y \in \{0,1\}^S$ such that $x > y$ coordinate-wise, $(\nu | Y_i = x_i, i \in S)$ is covered by $(\nu | Y_i = y_i, i \in S)$. In this setting, the stochastic covering property implies that $X$ stochastically covers $X^u$.}
%on $\Lambda$ with $k$ points almost surely for some finite $k >0$ 
%$S_1, S_2 \subseteq \{1, \ldots, n\}$ such that $S_2 \subset S_1$, $(X | S_2 \subseteq X)$ is covered by $(X | S_1 \subseteq X)$.}
% $(j = 0, \ldots k-1$, $X^{u_1, \dots, u_{j}}$ covers $X^{u_1, \ldots, u_{j+1}}$.}

%\eoadd{
Pemantle and Peres \cite{Pemantle11} studied probability measures on $\{0,1\}^n$ satisfying a negative dependence property called the strong Rayleigh property. This class of probability measures was introduced in \cite{Liggett09}, where it was also shown that determinantal point processes on a finite set satisfy this property. In \cite{Pemantle11}, the authors define notions called stochastic covering and the stochastic covering property, which can be defined as follows. %Let $x, y \subset \{1, \ldots, n\}$.
Letting $X$ and $Y$ be simple point processes, $X$ is said to \textit{stochastically cover} $Y$ if there is a coupling $(X,Y)$ such that $X = Y$ or their difference $X\setminus Y$ is one point. Now, consider a simple point process $X$ on a finite set $\Lambda = \{1, \ldots, n\}$. Then, $X$ is said to have the stochastic covering property if the following holds. If $u\in x\subseteq S\subseteq\Lambda$ and we set $y=x\setminus \{u\}$, then the point process $X_{S^c}$ conditioned on $X_S=x$ is stochastically covered by the point process $X_{S^c}$ conditioned on $X_S=y$. This property implies for $u \in \{1, \ldots, n\}$ (letting $S =x= \{u\}$) that the point process $X\setminus \{u\}$ conditioned on $u\notin X$ stochastically covers $X^u$, and in turn that $X$ stochastically covers $X^u$.

Proposition 2.2 in \cite{Pemantle11} states that for a probability measure on $\{0,1\}^n$, the strong Rayleigh property implies the stochastic covering property, and thus determinantal point processes on $\Lambda = \{1, \ldots, n\}$ satisfy this property. The authors discuss extensions to the case where $\Lambda$ is continuous, and in particular they extend their Proposition 2.3 to this case. However, a generalization of their Proposition 2.2 to the case of continuous determinantal point processes does not appear in the most recent version \cite{Pemantle11}. As kindly pointed out by a referee, in the first version of this paper on arXiv \cite{Pemantle11arxiv}, the authors claim $X$ stochastically covers $X^u$ in the continuous case as well, and the main idea of our proof of Theorem \ref{t:main-result} below is outlined. However, their justification is not complete for our general setting.

%Proposition 2.2 in \cite{Pemantle11} states that for a probability measure on $\{0,1\}^n$, the strong Rayleigh property implies the stochastic covering property, and thus determinantal point processes on $\Lambda = \{1, \ldots, n\}$ satisfy this property. 
%The authors discuss extensions to the case where $\Lambda$ is continuous, and in particular they extend their Proposition 2.3 to this case. However, a generalization of their Proposition 2.2 to the case of continuous determinantal point processes does not appear in the published version. \eoadd{In the first version of this paper on arXiv, the authors claim $X$ stochastically covers $X^u$ in the continuous case as well, and the main idea of our proof is outlined. However, the justification is not complete for our general setting.} % We present here a complete proof of this claim for a general locally compact Polish space $\Lambda$.}}

\section{Main result}\label{s:main}

The theorem below is our main result which is sharpening Goldman's result (G1) in %\eodelete{two ways: We do not assume condition (f) and we establish a coupling so that $X$ contains $X^u$, the difference is at most one point, and we can completely describe the distribution of this difference.} 
that we do not assume condition (f) and we establish a coupling so that $X$ stochastically covers $X^u$. It also sharpens Pemantle and Peres' result since it holds for a %by showing that $X$ stochastically covers $X^u$ for a 
general locally compact Polish space $\Lambda$. In addition, we completely describe the distribution of the difference $X \backslash X^u$. In the proof of the theorem we use basic results and definitions for operators on the Hilbert space $\mathcal{L}^2(\Lambda)$, see e.g.\ \cite{SzNagy-etal,Paulsen}. An outline of the proof is as follows. First, we dilate the operator associated to the DPP $X$ to a projection operator on the union of two copies of $\Lambda$. Second, we use the existence of a coupling for projection operators in Lemma~\ref{l1}. Finally, we compress back down to $\Lambda$ to obtain the desired coupling. 

We use the following special result established under condition (e) and where $\nu_S$ denotes the restriction of the reference measure $\nu$ to a compact set $S\subseteq\Lambda$.

\begin{lemma}\label{l1}
Assume $S\subseteq\Lambda$ is compact and 
let $\{\phi^S_k\}_{k=1}^n$ be an orthonormal set of functions in $L^2(S)$ with $1\le n<\infty$. Let 
$X$ and $Y$ be DPPs with kernels $K$ and $L$, respectively, so that 
\[ K(v,w) = \sum_{k=1}^n \phi^S_k(v) \overline{\phi^S_k(w)}, \qquad
L(v,w) = \sum_{k=1}^{n-1} \phi^S_k(v) \overline{\phi^S_k(w)}, \qquad v,w \in S\] 
(setting $L(v,w)=0$ if $n=1$). 
Then there exists a monotone coupling of $Y_S$ w.r.t.\ $X_S$ such that almost surely $Y_S\subset X_S$, $\eta_S:=X_S \setminus Y_S$ consists of one point, and the point in $\eta_S$ has density $|\phi^S_n(\cdot)|^2$ w.r.t.\ $\nu_S$.
\end{lemma}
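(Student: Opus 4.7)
The plan is to reduce the lemma to three standard ingredients: operator-level domination of the kernels, Strassen's theorem for point processes, and a one-line intensity computation.

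First, as operators on $L^2(S)$, the difference $K-L$ has kernel $\phi_n^S(v)\overline{\phi_n^S(w)}$ and is therefore a rank-one positive semidefinite operator, so $K\succeq L$ in the operator order. Both $K$ and $L$ are finite-rank orthogonal projections, of ranks $n$ and $n-1$ respectively. By the standard stochastic-domination theorem for determinantal point processes whose kernels are comparable in the operator order---established by Lyons in the discrete case and extendable to our continuous compact $S$ via discretization of $S$ into increasingly fine partitions (leveraging the local trace-class hypothesis to obtain weak convergence of the associated discrete DPPs)---the point process $X_S$ stochastically dominates $Y_S$ in the set-inclusion order. Strassen's theorem then furnishes a coupling on a common probability space with $Y_S\subseteq X_S$ almost surely; this is the desired monotone coupling.

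Second, since $K$ is a projection of rank $n$ and $L$ is a projection of rank $n-1$ (or the zero operator when $n=1$), a standard property of projection DPPs gives that $|X_S|=n$ and $|Y_S|=n-1$ almost surely. Combined with $Y_S\subseteq X_S$, this forces $|\eta_S|=|X_S|-|Y_S|=1$ almost surely, so $\eta_S$ consists of exactly one point. Then, by additivity of first-order intensities under the inclusion coupling,
\[\rho_{\eta_S}(y)=\rho_{X_S}(y)-\rho_{Y_S}(y)=K(y,y)-L(y,y)=|\phi_n^S(y)|^2.\]
Since $\eta_S$ almost surely consists of a single point, its first-order intensity is exactly the density of that point with respect to $\nu_S$, proving the claim.

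The hard part will be securing the stochastic-domination step in the continuous setting, since the cleanest formulations in the literature are for finite ground sets; the discretization-plus-limit argument needs some care to ensure the coupling passes to the limit and that inclusion is preserved along the approximating sequence. Once this is in hand, however, the proof is clean because the deterministic cardinalities of $X_S$ and $Y_S$ automatically upgrade any mere set-inclusion coupling to one whose differenced point has the prescribed marginal, without having to pin down the full joint law of $(Y_S,\eta_S)$.
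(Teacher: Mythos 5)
Your argument follows the same route as the paper's: the rank-one positive difference $K-L=\phi_n^S\overline{\phi_n^S}$ gives operator domination, Lyons' stochastic-domination theorem for determinantal processes furnishes the monotone coupling $Y_S\subseteq X_S$, the deterministic cardinalities of the two projection DPPs force $|\eta_S|=1$, and subtracting first-order intensities identifies the density of the removed point. The only thing to correct is your worry about the continuous case: Theorem~3.8 of \cite{Lyons:14}, which is exactly what the paper cites, is already stated for determinantal processes on general (locally compact Polish) spaces, so no discretization-and-limit step is required and the ``hard part'' you flag is resolved simply by invoking that theorem directly.
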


\begin{proof}
Observe that $K$ and $L$ are the kernels of finite dimensional projections, a special case of trace-class positive contractions, and the difference, 
\[K(v,w) - L(v,w) = \phi^S_n(v)\overline{\phi^S_n(w)}, \qquad v,w \in S,\] 
is a positive definite kernel.
Thus, by Theorem~3.8 in \cite{Lyons:14}, $X_S$ stochastically dominates $Y_S$. Therefore, there is a coupling such that almost surely $Y_S\subseteq X_S$. As $Y_S$ has cardinality one less than $X_S$, almost surely $\eta_S := X_S\setminus Y_S$ consists of one point. Finally, for any Borel set $A \subseteq S$,
\begin{equation*}
\mathrm{P}( \eta_S \cap A\not=\emptyset) = \mathrm{E}\left[1_{\{X(A) - Y(A) = 1\}}\right] = \mathrm{E}[X(A)] - \mathrm{E}[Y(A)] = \int_A |\phi^S_n(v)|^2\,\mathrm d\nu(v).
\end{equation*}
%Thus, the result follows.
\end{proof}

%Let $\K$ be the operator on $L^2(\Lambda, \nu)$ with kernel 
%\[K(u,v) = \sum_{k \geq 0} \lambda_k \phi^\Lambda_k(u)\overline{\phi^\Lambda_k(v)}.\] 
%Then, the operator $\mathcal{K}(\mathcal{I} - \mathcal{K})$ has a kernel with spectral representation
%\[ \sum_{k \geq 0} \lambda_k(1 - \lambda_k) \phi^\Lambda_k(u)\overline{\phi^\Lambda_k(v)},\]
%and the square room of the operator, $\sqrt{\mathcal{K}(\mathcal{I} - \mathcal{K})}$, has spectral representation
%\[ \sum_{k \geq 0} \sqrt{\lambda_k(1 - \lambda_k)} \phi^\Lambda_k(u)\overline{\phi^\Lambda_k(v)}.\]

Denote $\|\cdot\|_2$ the usual norm on $\mathcal{L}^2(\Lambda)$ w.r.t.\ $\nu$.

\begin{theorem}\label{t:main-result} Let $X$ be a DPP on $\Lambda$ with kernel $K$ satisfying conditions (a)--(d).
For any $u \in \Lambda$ with $K(u,u)>0$, there exists a coupling of $X$ and $X^u$ such that almost surely $X^u\subseteq X$ and $\xi_u:=X\setminus X^u$ consists of at most one point. 
%\[X = X^u \cup \xi^u, \qquad X^u \cap \xi^u = \emptyset,\]
%where $\xi^u$ is a point process in $\Lambda$ with at most one point. 
We have
\begin{equation}\label{e:def-pu}
p_u:=\mathrm{P}(\xi_u \neq \emptyset ) = \frac{1}{K(u,u)}\int |K(u,v)|^2\,\mathrm d\nu(v),
\end{equation}
and conditioned on $\xi_u \neq \emptyset$ the point in $\xi_u$ has density 
\begin{equation}\label{e:def-fu}
f_u(v):={|K(u,v)|^2}/{\|K(u, \cdot)\|_2^2},\qquad v\in\Lambda,
\end{equation} 
w.r.t.\ $\nu$.
%for any $B \in \mathcal{B}$,
%\[ \mathrm{P}(\xi^u \cap B\not=\emptyset\, |\, \xi^u \neq \emptyset ) = \int_B \frac{ |K(u,v)|^2}{\|K(u, \cdot)\|_2^2}\,\mathrm d\nu(v).\]
\end{theorem}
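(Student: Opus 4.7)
The plan is to follow the three-step outline in the paper: dilate the kernel to a projection on a doubled space, apply Lemma~\ref{l1} on the dilation, and compress back down. First, fix a compact set $S \subseteq \Lambda$ containing $u$. By the Mercer expansion \eqref{e:Mercer}, $K_S(v,w) = \sum_k \lambda_k^S \phi_k^S(v)\overline{\phi_k^S(w)}$ with $\lambda_k^S \in [0,1]$; on the doubled space $\tilde S := S \sqcup S$ (with reference measure $\nu$ on each copy) I would define
\begin{equation*}
\tilde K_S((v,i),(w,j)) := \sum_k \psi_k(v,i)\overline{\psi_k(w,j)}, \quad \psi_k(v,0) := \sqrt{\lambda_k^S}\,\phi_k^S(v),\; \psi_k(v,1) := \sqrt{1-\lambda_k^S}\,\phi_k^S(v).
\end{equation*}
The $\{\psi_k\}$ are orthonormal in $L^2(\tilde S)$, so $\tilde K_S$ is a projection kernel; direct computation gives $\tilde K_S((v,0),(w,0)) = K_S(v,w)$, and the Palm kernel $\tilde K_S^{(u,0)}$ restricted to the first copy equals $K_S^u$.

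Second, single out the unit vector $\psi^\star := \tilde K_S(\cdot,(u,0))/\sqrt{K(u,u)}$. Since $\tilde K_S$ is a projection, $\psi^\star$ lies in its range (and has unit norm because $\tilde K_S^2 = \tilde K_S$), and the rank-one subtraction $\tilde K_S(y,z) - \psi^\star(y)\overline{\psi^\star(z)}$ coincides with $\tilde K_S^{(u,0)}(y,z)$. When $K_S$ has finite rank $n$ (as under (e)), extending $\psi^\star$ to an orthonormal basis $\{\varphi_1,\ldots,\varphi_{n-1},\psi^\star\}$ of the range and invoking Lemma~\ref{l1} with $\phi_n^{\tilde S} := \psi^\star$ yields a coupling of the projection DPPs $\tilde X_S$ and $\tilde X_S^{(u,0)}$ in which the difference $\eta$ is a single point with density $|\psi^\star|^2$ on $\tilde S$. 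For general $K_S$ I would approximate by the finite-rank projections with kernels
\begin{equation*}
\tilde K_S^{(N)}(y,z) := \psi^\star(y)\overline{\psi^\star(z)} + \sum_{k=1}^{N-1} \varphi_k(y)\overline{\varphi_k(z)},
\end{equation*}
where $\{\psi^\star,\varphi_1,\varphi_2,\ldots\}$ is an orthonormal basis of the range of $\tilde K_S$, apply Lemma~\ref{l1} to each truncation, and extract a limit by tightness and a Skorokhod representation; crucially the density $|\psi^\star|^2$ of the removed point stays fixed along the approximation.

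Third, compress to the first copy: $X_S := \tilde X_S \cap (S\times\{0\})$ and $X_S^u := \tilde X_S^{(u,0)} \cap (S\times\{0\})$ have the correct marginal laws, $X_S^u \subseteq X_S$, and $\xi_u := \eta \cap (S\times\{0\})$ is either empty or a single point. Since $|\psi^\star(v,0)|^2 = |K(u,v)|^2/K(u,u)$, the probability that $\xi_u$ is one point in $S$ equals $\int_S |K(u,v)|^2\,\mathrm d\nu(v)/K(u,u)$, with conditional density proportional to $|K(u,v)|^2$. Finally, to pass from compact $S$ to $\Lambda$, take an exhaustion $S_n \uparrow \Lambda$ with $u \in S_1$: because $K_{S_n}$ and $K_{S_n}^u$ are the restrictions of $K$ and $K^u$, the local couplings can be arranged compatibly, and a standard Kolmogorov projective-limit construction for simple point processes on a locally compact Polish space, together with monotone convergence in $n$, gives the global coupling and the formulas \eqref{e:def-pu}--\eqref{e:def-fu}.

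The main obstacle is the infinite-rank step. When the range of $\tilde K_S$ is infinite-dimensional, both $\tilde X_S$ and $\tilde X_S^{(u,0)}$ have infinitely many points almost surely, and the cardinality argument used inside the proof of Lemma~\ref{l1} no longer forces the symmetric difference to consist of exactly one point. Producing an almost-sure limit coupling that retains both the containment and the at-most-one-point property is where the nontrivial work lies—this is presumably why the sketch in \cite{Pemantle11arxiv} was left incomplete in the continuous setting. A secondary concern is arranging the local dilations across the exhaustion $S_n$ so that the couplings actually glue into a coherent coupling on all of $\Lambda$.
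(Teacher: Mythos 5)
Your overall strategy matches the paper's: dilate $\K_S$ to a projection on a doubled space, identify $\psi^\star := \tilde K_S(\cdot,(u,0))/\sqrt{K(u,u)}$ as the unit vector whose rank-one subtraction gives the Palm kernel, apply Lemma~\ref{l1} (via finite-rank truncations), and compress back to $\Lambda$. You also correctly observe that $|\psi^\star(\cdot,0)|^2 = |K(u,\cdot)|^2/K(u,u)$ gives the density of the removed point after compression, and that the density of the removed point stays fixed along a finite-rank approximation scheme. These are all elements of the paper's argument.

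There is, however, a genuine gap in your dilation step. On $\tilde S = S \sqcup S$ with reference measure $\nu$ on each copy, the second diagonal block of your $\tilde K_S$ is the kernel of $\mathcal I - \K_S$, which has trace $\sum_k(1-\lambda_k^S)$. When $L^2(S,\nu)$ is infinite-dimensional (e.g.\ $\nu$ diffuse on a compact $S$ of positive measure), this trace is infinite, so $\tilde K_S$ is not locally trace class and does not define a DPP: the putative $\tilde X_S$ would have a.s.\ infinitely many points in the bounded set $S\times\{1\}$, i.e.\ it is not a point process at all. This also derails your proposed tightness argument, since $\mathrm E[\tilde X_S^{(N)}(B)]\to\infty$ for $B$ meeting the second copy. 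The paper avoids this by the device of Lyons \cite{Lyons:14}: conjugate the second copy by a unitary $\mathcal U$ so that the second summand is $\ell^2(\Lambda_0')$ for a countably infinite \emph{discrete} $\Lambda_0'$, yielding $\mathcal Q' = \begin{bmatrix}\K & \mathcal{LU}\\ \mathcal U^*\mathcal L & \mathcal U^*(\mathcal I-\K)\mathcal U\end{bmatrix}$. Since every operator on $\ell^2$ of a discrete space is automatically locally trace class, $\mathcal Q'$ is locally trace class and its projection DPP $Y_Q$ on $\Lambda\cup\Lambda_0'$ genuinely exists, so the finite-rank truncations $Q_M'$ and $Q_{M,u}'$, Lemma~\ref{l1}, and the tightness argument (following Goldman's Lemma~20) all go through. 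Without this (or an equivalent) modification, the central object of your construction is ill-defined; the obstacle you describe as ``the cardinality argument in Lemma~\ref{l1} no longer applies'' is real, but it is downstream of the more basic failure that $\tilde X_S$ does not exist. A secondary point: your appeal to a Kolmogorov projective-limit for the exhaustion $S_n\uparrow\Lambda$ is not wrong in spirit, but the paper instead builds the global coupling sequentially on the annuli $T_n = S_n\setminus S_{n-1}$ conditionally on the preceding annuli; this explicit construction is what makes the at-most-one-point property of the global $\xi_u$ transparent.
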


Compared to Goldman's result (G1), we also have $p_u=\mathrm P(\kappa_u\not=\emptyset)$ and $f_u$ is the conditional density of a point in $\kappa_u$ given that $\kappa_u\not=\emptyset$, cf.\ \eqref{e:rhokappau}--\eqref{e:def-fu}. 

\begin{proof} 
We begin by describing the procedure given in Lyons' paper \cite[Section 3.3]{Lyons:14} for dilating a locally trace class operator to a locally trace class orthogonal projection. Denote $\K$ the locally trace class operator on $\mathcal{L}^2(\Lambda)$ with kernel $K$. 
%\eodelete{As in section 3.3 in \cite{Lyons:14},} 
Consider the dilation of $\K$ given by
\[\mathcal{Q} := \begin{bmatrix} \K & \mathcal{L} \\  \mathcal{L} & \mathcal{I} - \K \end{bmatrix},\]
where $\mathcal{L} := \sqrt{\K (\mathcal{I} - \K)}$. Then, since $Q = Q^2$, $Q$ is an orthogonal projection on $L^2(\Lambda, \nu) \oplus L^2(\Lambda_0, \nu)$, where $\Lambda_0$ is a disjoint identical copy of $\Lambda$. If $\Lambda$ is discrete, then $Q$ is clearly locally trace class, since any compact set of a discrete space is finite. %, and thus any operator on a discrete base space is locally trace class. 
If $\Lambda$ is not discrete, consider the operator 
\[\mathcal{Q}' := \begin{bmatrix}\mathcal{I} & 0 \\ 0 & \mathcal{U} \end{bmatrix}^*\mathcal{Q}\begin{bmatrix}\mathcal{I} & 0 \\ 0 & \mathcal{U} \end{bmatrix} = \begin{bmatrix} \K & \mathcal{L}\mathcal{U} \\  \mathcal{U}^*\mathcal{L} & \mathcal{U}^*(\mathcal{I} - \K)\mathcal{U} \end{bmatrix},\] 
where $\mathcal{U}$ is a unitary operator from $\ell^2(\Lambda_0')$ to $L^2(\Lambda_0, \nu)$ for some countably infinite space $\Lambda_0'$. The operator $\mathcal{U}$ exists since any two infinite dimensional separable Hilbert spaces are unitarily equivalent. The operator $\mathcal{Q}'$ is an orthogonal projection on $L^2(\Lambda, \nu) \oplus \ell^2(\Lambda_0')$, and $\K$ is the compression of $\mathcal{Q}'$ to $\Lambda$. Further, $\mathcal{Q}'$ is also locally trace class, because $\mathcal{K}$ is locally trace class on $L^2(\Lambda, \nu)$ by assumption, and %\textcolor{red}{$\mathcal{U}^*\mathcal{Q}\mathcal{U}$ is} \textcolor{blue}{
all operators on $\ell^2(\Lambda_0')$ are locally of trace class since $\Lambda_0'$ is discrete. Thus, $\mathcal{Q}'$ defines a projection DPP $Y_Q$ on the union $\Lambda \cup \Lambda_0'$. 

First, assume that $\Lambda$ is compact. Then, the kernel of the operator $\mathcal{K}$ satisfies
\[K(v,w) = \sum_{k \geq 1} \lambda^\Lambda_k \phi^\Lambda_k(v)\overline{\phi^\Lambda_k(w)}, \qquad v,w \in \Lambda,\]
where $\{\phi^\Lambda_k\}$ is an orthonormal basis for $L^2(\Lambda)$, $\lambda^\Lambda_k \in [0,1]$ for all $k$, and $\sum_{k \geq 1} \lambda^\Lambda_k < \infty$. 
Also, the kernel for the operator $\mathcal{L}$ is then given by
\[L(v, w) = \sum_{k \geq 1} \sqrt{\lambda^\Lambda_k(1-\lambda^\Lambda_k)} \phi^\Lambda_k(v)\overline{\phi^\Lambda_k(w)}.\]

Note that
\begin{align*}
\mathcal{L}(L(\cdot, u))(w) = \int_{\Lambda} L(w, v)L(v, u)\,\mathrm d\nu(v) =\sum_{k \geq 1} \lambda^\Lambda_k(1-\lambda^\Lambda_k) \phi^\Lambda_k(w)\overline{\phi^\Lambda_k(u)},
\end{align*}
and 
\begin{align*}
\mathcal{K}(K(\cdot, u))(w) = \int_{\Lambda} K(w, v)K(v, u)\,\mathrm d\nu(v) =\sum_{k \geq 1} \left(\lambda^\Lambda_k\right)^2 \phi^\Lambda_k(w)\overline{\phi^\Lambda_k(u)}.
\end{align*}
Hence, $\mathcal{K}(K(\cdot, u)) + \mathcal{L}(L(\cdot, u)) = K(\cdot, u)$. Also,
\begin{align*}
\mathcal{L}(K(\cdot, u))(w) = \int_{\Lambda} L(w, v)K(v, u)\,\mathrm d\nu(v) =\sum_{k \geq 1} \lambda^\Lambda_k\sqrt{\lambda^\Lambda_k(1-\lambda^\Lambda_k)} \phi^\Lambda_k(w)\overline{\phi^\Lambda_k(u)}
\end{align*}
and 
\begin{align*}
\mathcal{K}(L(\cdot, u))(w) = \int_{\Lambda} K(w, v)L(v, u)\,\mathrm d\nu(v) =\sum_{k \geq 1}  \lambda^\Lambda_k\sqrt{\lambda^\Lambda_k(1-\lambda^\Lambda_k)}  \phi^\Lambda_k(w)\overline{\phi^\Lambda_k(u)},
\end{align*}
and so $\mathcal{L}(K(\cdot, u)) = \mathcal{K}(L(\cdot, u))$.
Consequently, for fixed $u \in \Lambda$,
\[\psi_u(\cdot) := \begin{bmatrix} \frac{K(\cdot,u)}{\sqrt{K(u,u)}} \\ \mathcal{U}^*\left(\frac{L(\cdot,u)}{\sqrt{K(u,u)}}\right) \end{bmatrix}\] 
is an eigenvector of the operator $\mathcal{Q}'$. Indeed, since $\mathcal{U}\mathcal{U}^* = \mathcal{I}$ by that fact that $\mathcal{U}$ is unitary, 
\begin{align*}
\mathcal{Q}'(\psi_u(\cdot)) &= \begin{bmatrix}\mathcal{I} & 0 \\ 0 & \mathcal{U} \end{bmatrix}^*Q \begin{bmatrix} \frac{K(\cdot,u)}{\sqrt{K(u,u)}} \\ (\mathcal{U}\mathcal{U}^* )\left(\frac{L(\cdot,u)}{\sqrt{K(u,u)}}\right) \end{bmatrix} \\
&= \begin{bmatrix}\mathcal{I} & 0 \\ 0 & \mathcal{U}^* \end{bmatrix}\begin{bmatrix}  \frac{\mathcal{K}(K(\cdot, u))}{\sqrt{K(u,u)}} + \frac{\mathcal{L}(L(\cdot, u))}{\sqrt{K(u,u)}}  \\  \frac{\mathcal{L}(K(\cdot, u))}{\sqrt{K(u,u)}} + \frac{(\mathcal{I} - \K)(L(\cdot, u))}{\sqrt{K(u,u)}} \end{bmatrix} =  \begin{bmatrix} \frac{K(\cdot,u)}{\sqrt{K(u,u)}} \\ \mathcal{U}^*\left(\frac{L(\cdot ,u)}{\sqrt{K(u,u)}} \right)\end{bmatrix} = \psi_u(\cdot). 
\end{align*}
%So $\psi_u$ is in the range of $\mathcal{Q}'$ and 
Then, we can define the projection
\[\mathcal{Q}_u' := \mathcal{Q}' - P_{\psi_u}, \]% \begin{bmatrix} \frac{K(v,u)}{\sqrt{K(u,u)}} \\ \frac{L(w,u)}{\sqrt{K(u,u)}} \end{bmatrix} \begin{bmatrix} \frac{K(v,u)}{\sqrt{K(u,u)}} & \frac{L(w,u)}{\sqrt{K(u,u)}} \end{bmatrix},\]
%where 
%\[P_{\psi_u} = \begin{bmatrix} \frac{K(\cdot,u)}{\sqrt{K(u,u)}} \\ \mathcal{U}^*\left(\frac{L(\cdot,u)}{\sqrt{K(u,u)}}\right) \end{bmatrix} \begin{bmatrix} \frac{K(\cdot,u)}{\sqrt{K(u,u)}} \\ \mathcal{U}^*\left(\frac{L(\cdot,u)}{\sqrt{K(u,u)}}\right) \end{bmatrix}^T\]
where $P_{\psi_u}$ is the projection operator on $L^2(\Lambda, \nu) \oplus \ell^2(\Lambda_0')$ onto the span of $\psi_u$. This projection operator is also locally trace class since it is the difference of locally trace class operators.
Then we can define the projection DPP $Y_Q^u$ on $\Lambda \cup \Lambda_0'$ associated with $\mathcal{Q}_u'$. If $\mathcal{Q}'$ has finite rank, then $\mathcal{Q}'$ and $\mathcal{Q}_u'$ have corresponding kernels
\[Q' = \sum_{k=0}^n q_k q_k^T\quad \text{ and }\quad Q_{u}' = \sum_{k=1}^n q_k q_k^T,\]
where $n < \infty$, $\{q_k\}_{k=1}^n$ is an orthonormal set, and $q_0 :=  \psi_u$. Applying Lemma~\ref{l1} then gives the result. 

Now, assume $\mathcal{Q}'$ projects onto an infinite dimensional subspace of $L^2(\Lambda, \nu) \oplus \ell^2(\Lambda_0')$ and let $\{q_k\}_{k=0}^{\infty}$ be an orthonormal basis for the range of $\mathcal{Q}'$, where $q_0 :=  \psi_u$. For each positive integer $M$, define the finite dimensional projection kernels
\[Q_M' = \sum_{k=0}^M q_k q_k^T\quad \text{ and }\quad Q_{M, u}' = \sum_{k=1}^M q_k q_k^T,\]
and let $Y_{Q_M}$ and $Y^u_{Q_M}$ be the corresponding projection DPPs. By Lemma \ref{l1}, there is a coupling of $Y_{Q_M}$ and $Y_{Q_M}^u$ such that almost surely
$Y_{Q_M} \supset Y^u_{Q_M}$, 
where $\xi_{Q_M}^u:=Y_{Q_M} \setminus Y^u_{Q_M}$ consists of one point which has density $|\psi_u(\cdot)|^2$. By the same argument as in the proof of Lemma~20 in \cite{Goldman}, the sequences $Y_{Q_M}$ and $Y^u_{Q_M}$ are tight and converge in distribution to $Y_Q$ and $Y^u_Q$, respectively, as $M \to \infty$. Also, the sequence $(Y^u_{Q_M}, \xi_{Q_M}^u)_{M}$ is tight, and thus a subsequence converges in distribution to $(Y^u_Q, \xi^u_Q)$, where $\xi^u_Q$ consists of one point with density $|\psi_u(\cdot)|^2$, and $Y^u_Q \cup \xi^u_Q$ is equal in distribution to $Y_Q$. %Thus, taking the limit as $M \to \infty$ gives the existence of a coupling of  $Y_{Q}$ and $Y_{Q}^u$ such that almost surely,
%\[Y_{Q} = Y^u_{Q} \cup \xi_{Q}^u,\]
%where $\xi_{Q}^u$ consists of one point.

The projection operator $P_{\psi_u}$ has kernel $\psi_u \psi_u^T$ and the compression of $P_{\psi_u}$ to $\Lambda$ is the integral operator with kernel
\[\frac{K(v,u)K(u,w)}{K(u,u)}.\]
Then, since the compression of $\mathcal{Q}'$ to $\Lambda$ is the operator $\mathcal{K}$, the compression of $\mathcal{Q}_u'$ to $\Lambda$ is the integral operator $\mathcal{K}^u$ with kernel
\[K^u(v,w) = K(v,w) -\frac{K(v,u)K(u,w)}{K(u,u)}.\]
This gives that %Then, by is the operator $\K^u$ associated to $X^u$, and compressing $Q$ gives $\mathcal{K}$, we see that 
$Y_Q \cap \Lambda$ has the same distribution as $X$ and $Y^u_Q \cap \Lambda$ has the same distribution as $X^u$. Thus, almost surely
\[X = X^u \cup \xi_u,\] 
where $\xi_u := \xi_Q^u \cap \Lambda$ and $X^u$ are disjoint. Therefore, we have a coupling of $X$ and $X^u$, where almost surely $X^u \subseteq X$ and the difference is at most one point. 
The probability of $\xi_u \neq \emptyset$ is the probability that $\xi^u_Q$ is in $\Lambda$, and
the density of $\xi^u_Q$ restricted to $\Lambda$ is
\[f_{\xi^u_Q}(v)1_{\{v \in \Lambda\}} = \frac{|K(v,u)|^2}{K(u,u)}\]
w.r.t.\ $\nu$. 
Hence,
\begin{align*}
\mathrm{P}(\xi^u \neq \emptyset) = \mathrm{P}(\xi^u_Q \in \Lambda) = \int\frac{|K(v,u)|^2}{K(u,u)}\,\mathrm d\nu(v)
%=\frac{\sum_{k\geq 1} \lambda_k^2 |\phi_k(u)|^2}{K(u,u)},
\end{align*}
and the density of $\xi_u$ conditioned on $\xi_u \neq \emptyset$ is $f_u(v)  = {|K(v,u)|^2}/{\|K(\cdot,u)\|_2^2}$ w.r.t.\ $\nu$.

Second, if $\Lambda$ is not assumed to be compact,
consider a sequence of compact sets $S_n \subset \Lambda$ such that $\cup_{n=1}^\infty S_n = \Lambda$ and $S_n \subseteq S_{n+1}$ for $n=1,2,\ldots$. For each $n$, using the result above with $\Lambda$ replaced by $S_n$, there exists a coupling of $(X_{S_n}, X_{S_n}^u)$, where almost surely $X_{S_n} = X_{S_n}^u \cup \xi^u_{S_n}$, $\xi^u_{S_n}=X_{S_n}\setminus X_{S_n}^u$ consists of at most one point, 
\begin{equation}\label{e:helpa}
\mathrm{P}\left(\xi^u_{S_n} \neq \emptyset\right) = \int_{S_n}\frac{|K(v,u)|^2}{K(u,u)}\,\mathrm d\nu(v),
\end{equation}
and conditioned on $\xi^u_{S_n} \neq \emptyset$ the density of the point in $\xi^u_{S_n}$ is 
\begin{equation}\label{e:helpb}
f_{u,S_n}(v)=|K(v,u)|^2/\int_{S_n} |K(w,u)|^2\,\mathrm d\nu(w)
\end{equation}
w.r.t.\ $\nu_{S_n}$.
For consistency, 
let $T_1=S_1$ and generate a realization $(y_{T_1},y^u_{T_1})$ of $(Y_{T_1},Y^u_{T_1}):=(X_{S_1},X^u_{S_1})$, 
and for $n=2,3,\ldots$, let $T_n=S_n\setminus S_{n-1}$ and generate a realization $(y_{T_n},y^u_{T_n})$ of $(Y_{T_n},Y^u_{T_n})$ 
which follows the conditional distribution of $(X_{S_n}\setminus S_{n-1},X^u_{S_n}\setminus S_{n-1})$ 
given that $(X_{S_{n}}\cap S_{n-1},X^u_{S_{n}}\cap S_{n-1})=(\cup_{i=1}^{n-1}y_{T_i},\cup_{i=1}^{n-1}y^u_{T_i})$. 
Then $(X,X^u)$ is distributed as $(Y,Y^u):=(\cup_{n=1}^\infty Y_{T_n},\cup_{n=1}^\infty Y^u_{T_n})$, and almost surely, for $n=2,3,\ldots$, $Y_{T_{n-1}}\setminus Y^u_{T_{n-1}}\not=\emptyset$ implies that 
$Y_{T_{n}}\setminus Y^u_{T_{n}}=Y_{T_{n+1}}\setminus Y^u_{T_{n+1}}=\ldots=\emptyset$, and so $\xi_u:=Y\setminus Y^u$ consists of at most one point.  
%take $Y_{S_{n-1}} := X_{S_{n}} \cap S_{n-1}$ and $X^u_{S_{n-1}} := X^u_{S_{n}} \cap S_{n-1}$ for each $n$. This gives that in distribution $X_{S_{n-1}} = X^u_{S_{n-1}} \cup \xi^u_{S_{n-1}}$, where $\xi^u_{S_{n-1}} = \xi^u_{S_{n}} \cap S_{n-1}$, and so also consists of at most one point. Taking $n$ to infinity gives a limiting coupling of DPPs $X = \cup_n X_{S_n}$ and $X^u = \cup_n X_{S_n}^u$ on $\Lambda$ such that $X = X^u \cup \xi^u$, where $\xi^u := \cup_n \xi^u_{S_n}$. Since $\xi^u_{S_{n-1}} \subseteq \xi^u_{S_n}$ for all $n$, $\xi^u$ consists of at most one point. 
The probability that $\xi_u$ is non-empty is, by \eqref{e:helpa},
\[\mathrm{P}(\xi_u \neq \emptyset) =\lim_{n \to \infty}\int_{S_n}\frac{|K(v,u)|^2}{K(u,u)}\,\mathrm d\nu(v) \]
% \begin{align*}
%\mathrm{P}(\xi_u \neq \emptyset) =\,& \lim_{n \to \infty} \mathrm{P}\left(\xi^u_{S_n}\neq \emptyset\right) = \lim_{n \to \infty}\int_{S_n}\frac{|K(v,u)|^2}{K(u,u)}\,\mathrm d\nu(v) 
%%\\=\,&  \int_{\Lambda}\frac{|K(v,u)|^2}{K(u,u)}\,\mathrm d\nu(v).
%\end{align*}
and hence by monotone convergence we obtain \eqref{e:def-pu}.
%Similarly, we obtain
Finally, \eqref{e:def-fu} is obtained in a similar way using \eqref{e:helpb}. 
%see that the density of the point in $\xi_u$ conditioned on $\xi_u \neq \emptyset$ is 
%%$f_{\xi^u | \xi^u \neq \emptyset}(v)  = 
%$\frac{|K(v,u)|^2}{\|K(\cdot,u)\|_{2}^2}$.

%$X_{S_1} \subseteq X_{S_2} \subseteq \cdots \subseteq X$, for each $n$, $X^u_{S_n} \subseteq X_{S_n}$, thus $\cup_n X^u_{S_n} = X^u \subseteq X$, and $X \backslash X^u$ = \cup_n 

\end{proof}

\section{Describing repulsiveness in DPPs}\label{s:discussion}

%Russ' comment about the Poisson process caused that I moved the first 5 lines to just after equation (1).
In this section we use the probability $p_u$ to quantify how repulsive a DPP can be, and we use the density $f_u$ from Theorem~\ref{t:main-result} to describe the repulsive effect of a fixed point contained in a DPP. As mentioned in Section~\ref{s:measure of repulsiveness}, in the case of 
stationary DPPs on $\mathbb R^d$, $p_u$ turns out to agree with a measure for repulsiveness studied in \cite{LMR15,LMR12extended,BiscioLavancier,BaccelliOReilly}, but we are not aware of references where $f_u$ has been considered when discussing repulsiveness in DPPs. Examples of $p_u$ and $f_u$ for specific models of DPPs are given in Section~\ref{s:examplesDPP}.

Note that $X^u$ is the point process where there is a `ghost point' at $u$ that is affecting the remaining points. %\textcolor{red}{From the coupling it is clear that the effect this ghost point has is} \textcolor{blue}{
Using this coupling of $X^u$ and $X$, it is clear that the repulsive effect of a point at location $u$ is characterized by the difference between $X^u$ and the original DPP $X$, where there is no repulsion coming from the location $u$. 
Further, as $X$ and $X^u$ have intensity functions $\rho(\cdot)$ and $\rho(u,\cdot)/\rho(u)$, respectively, $\xi_u=X\setminus X^u$ has intensity function 
\[\rho_u(v):={|K(v,u)|^2}/{K(u,u)},\qquad v\in\Lambda.\]
%\[\rho_u(v):={|K(v,u)|^2}/{\|K(\cdot,u)\|_{2}^2},\qquad v\in\Lambda.\]
This is the intensity function for the points in $X$ `pushed out' by $u$ under the Palm distribution.
It makes also sense to consider $\rho_u$ as the intensity function of $X\setminus X^u$ when $\nu$ is diffuse and $X$ is a Poisson process because then $X=X^u$ and $\rho_u(v)=0$ for $v\not=u$. 

\subsection{A measure of repulsiveness}\label{s:measure of repulsiveness}

 Setting $0/0=0$, recall that
the pair correlation function of $X$ is defined by $g(v,w)=\rho(v,w)/(\rho(v)\rho(w))$ for $v,w\in\Lambda$, so it satisfies
\[1-g(v,w)=|r(v,w)|^2,\qquad v,w\in\Lambda,\]
where $r(v,w)=K(v,w)/\sqrt{K(v,v)K(w,w)}$ is the correlation function obtained from $K$. Note that %as $X$ and $X^u$ have intensity functions $\rho(\cdot)$ and $\rho(u,\cdot)/\rho(u)$, respectively, 
\begin{equation}\label{e:pcf}
\rho_{u}(v)=\rho(v)(1-g(u,v)),\qquad v\in\Lambda.
\end{equation}
%Russ' comment about the trade-off caused that I moved "which shows a trade-off..." to the next paragraph and modified the following text.
%%which shows  a trade-off between intensity and repulsiveness (see also \cite{LMR15,LMR12extended}). 

%and because of \eqref{e:pcf}, when using $\rho_{u}$ to compare repulsiveness in two DPPs, they should share the same intensity function $\rho$ (see also \cite{LMR15,LMR12extended}), but the distribution of the removed point $\xi_u$ is a characteristic of the DPP that is not dependent on $\rho$.  Taking this into account,
%small/high values of $\rho_{u}$ correspond to small/high degree of repulsiveness. 

\subsubsection{Defining a global measure of repulsiveness}\label{s:4.1.1}
As a global measure of repulsiveness in $X$ when having a point of $X$ at $u$, we suggest the probability of $\xi_u\not=\emptyset$, that is, 
%Russ' comment about the trade-off caused me to add a new expression to the following definition of $p_u$.
\[p_u=\int \rho_{u}(v)\,\mathrm d\nu(v)=\int {|K(u,v)|^2}/{K(u,u)}\,\mathrm d\nu(v).\]
%Russ' comment caused me to change the period above to a comma and to change "This" to "which".
%Russ' comment about the trade-off caused me to add the following sentence.
By \eqref{e:pcf}, there is a trade-off between intensity and repulsiveness: If $p_u$ is fixed, we cannot both increase $\rho$ and decrease $g$. Therefore, when using $p_u$ as a measure to compare repulsiveness in two DPPs, they should share the same intensity function $\rho$. Then small/high values of $p_{u}$ correspond to small/high degree of repulsiveness.  For a stationary DPP $X$ on $\mathbb R^d$, 
%{\color{red} \st{apart from a constant (given by the intensity of $X$),}} $p_u$ }}{\color{red} 
$p_u$ agrees with the measure for repulsiveness in DPPs introduced in 
\cite{LMR15,LMR12extended}; see also \cite{BiscioLavancier,BaccelliOReilly}. Indeed this measure is very specific for DPPs as discussed later in Section~\ref{s:rem}. 
%{\color{red} Finally, note that when the intensity function $\rho$ is constant, conditioned on $\xi_u\not=\emptyset$, the density $f_u(v)=\rho_u(v)/p_u$ of the removed point $\xi_u$ is a characteristic of the DPP that is not dependent on the intensity function $\rho$. Jesper: I think this is not true in general.  Consider (14). IF we fix $p_u$, then $\rho = p_u/(\beta \pi)$ and $f_u(v) \sim N(u,\beta)$. Then, there is a trade-off where the larger the intensity is, the small the variance is of the point in $\xi_u$ that is removed.}

%The following proposition shows that $p_u$ is a probability which relates to the spectral decomposition of $K$ restricted to a compact set increasing towards $\Lambda$.  
%
%\begin{prop}\label{p1}
%Assume (a)--(d) and let $u\in\Lambda$ with $K(u,u)>0$. Then $0\le p_u\le1$. Moreover, $p_u=1$ if and only if for an
% increasing sequence of compact sets $S_1\subseteq S_2\subseteq\ldots$ so that $u\in S_1$ and $\Lambda=\cup_{i=1}^\infty S_i$, for all the eigenvalues in the spectral decomposition of $K_{S_i}$ we have that $\lambda^{S_i}_k$ tends to 0 or 1 when $\phi^{S_i}_k(u)\not=0$, or more precisely 
%\[\lim_{i\rightarrow\infty}\sum_k\lambda^{S_i}_k(1-\lambda^{S_i}_k)|\phi^{S_i}_k(u)|^2=0.\]
%\end{prop}

%Consequently, i

\subsubsection{Definition of globally most repulsive DPPs}
If $p_u=1$ for all $u\in\Lambda$ with $K(u,u)>0$, we say that $X$ is a globally most repulsive DPP. This is the case if $K$ is a projection, that is, for all $v,w\in\Lambda$,
\[K(v,w)=\int K(v,y)K(y,w)\,\mathrm d\nu(y).\] 
For short we then say that $X$ is a projection DPP. 
%Russ' comment about the standard Ginibre point process  caused that the following sentence has been changed.
The standard Ginibre point process given by \eqref{e:Ginibre-standard-kernel} is globally most repulsive, and its kernel is indeed a projection; this follows from a straightforward calculation using that $(v,w)\to\exp(v\overline{w})$ is the reproducing kernel of the Bargmann-Fock space equipped with the standard complex Gaussian measure.
At the other end, if $\nu$ is diffuse and $X$ is a Poisson process with intensity function $\rho$, then $p_u=0$ for all $u\in\Lambda$ with $\rho(u)>0$, and so $X$ is a globally least repulsive DPP. 

 If $\Lambda$ is compact, then it follows from the spectral representation \eqref{e:Mercer} and condition (d) that
\begin{align}\label{e:Kintbnd}
\nonumber \int_S|K(u,v)|^2\,\mathrm d\nu(v)
=&
\sum_k\sum_\ell\lambda^S_k\lambda^S_l\phi^S_k(u)\overline{\phi^S_\ell(u)}\int_S\overline{\phi^S_k(v)}\phi^S_\ell(v)\,\mathrm d\nu(v)\\ 
=&\sum_k\left(\lambda^S_k\right)^2|\phi^S_k(u)|^2\le \sum_k\lambda^S_k|\phi^S_k(u)|^2=K(u,u),
\end{align}
and so
\begin{equation}\label{e:compact p}
p_u=\frac{\sum_k\left(\lambda^\Lambda_k\right)^2|\phi^\Lambda_k(u)|^2}{\sum_k\lambda^\Lambda_k|\phi^\Lambda_k(u)|^2}.
\end{equation} 
Consequently, in this case, projection DPPs are the only globally most repulsive DPPs. Such a process has a fixed number of points which agrees with the rank of the kernel. 

\subsection{Examples}\label{s:examplesDPP}

This section shows specific examples of our measure $p_u$ and the distribution of a point in $\xi_u$.
 
\subsubsection{DPPs defined on a finite set}\label{ex:1} 
Assume $\Lambda=\{1,\ldots,n\}$ is finite and $\nu$ is the counting measure; this is the simplest situation. Then $L^2(\Lambda)\equiv\mathbb C^n$, the class of possible kernels for DPPs corresponds to the class of $n\times n$ complex covariance matrices with all eigenvalues $\le1$, and the eigenfunctions simply correspond to normalized eigenvectors for such matrices. For simplicity we only consider projection DPPs and Poisson processes below, but other examples of DPPs on finite sets include uniform spanning trees (Example~14 in \cite{Hough:etal:09}) and  finite DPPs converging to the continuous Airy process on the complex plane \cite{Johansson}. 

The projection DPPs are given by complex projection matrices, ranging between the degenerated cases where $X=\emptyset$ and $X=\Lambda$. For example, consider the projection kernel of rank two given by 
$K(v,w)=\frac{1}{n}+t_v\overline{t_w}$, where $\sum_{i=1}^n t_i=0$ and $\sum_{i=1}^n|t_i|^2=1$. For any $u\in\{1,\ldots,n\}$, we have $p_u=1$ and 
\[\rho_{u}(v)=\frac{|\frac{1}{n}+t_u\overline{t_v}|^2}{\frac{1}{n}+|t_u|^2},\qquad v\in\{1,\ldots,n\},\]
is a probability mass function. This shows the repulsive effect of having a point of $X$ at $u$; in particular, $\rho_{u}(v)$
has a global maximum point at $v=u$.

The kernel of a Poisson process with intensity function $\rho\le1$ and conditioned on having no multiple points is given by a diagonal covariance matrix with diagonal entries $\rho(1),\ldots,\rho(n)$. If $\rho(u)>0$, then $p_u=\rho(u)$. This is a much different result as when we consider a Poisson process $X$ on a space $\Lambda$ where the reference measure $\nu$ is diffuse: If $\rho(u)>0$, then $p_u=0$ and almost surely $X=X^u$.
 
\subsubsection{Ginibre point processes}\label{ex:2}
From the standard Ginibre point process given by \eqref{e:Ginibre-standard-kernel}, other stationary point processes can be obtained. 
%Russ' comment about the 3 steps reduced to 2 steps caused the following change of lines in this paragrapgh.
Independently thinning the process with a retention probability $\alpha\beta$, where 
$\beta>0$ and $\alpha\in(0,1/\beta]$,
and multiplying each of the retained points by $\sqrt\beta$ gives a new stationary DPP 
%Third, multiplying the kernel of this new DPP with a parameter $\alpha\in(0,1/\beta]$, this 
%result in a stationary DPP $X$ 
with kernel
\begin{equation}\label{e:Kalphabeta}
%K_{\alpha,\beta}
K(v,w)=\frac\alpha\pi 
\exp\left(\frac{v\overline{w}}{\beta}-\frac{|v|^2+|w|^2}{2\beta}\right),\qquad v,w\in\mathbb C.
\end{equation}
%If $f_u=\rho_{\kappa_u}/p_u$ denotes the density of a point in $\kappa_u=X\setminus X^u$, we 
We have 
\begin{equation}\label{e:d1}
\rho={\alpha}/{\pi},\qquad p_u=\alpha\beta,\qquad f_u(v)=\frac{\exp\left(-|v-u|^2/\beta\right)}{\pi\beta}\sim N_\mathbb{C}(u,\beta). %,\qquad v\in\mathbb C.
\end{equation} 
The case where $\alpha = 1$ and $0 <\beta \leq 1$ is mentioned in Goldman's paper \cite{Goldman}, and the results in \eqref{e:d1} match those in Remark~24 in \cite{Goldman}. \cite{Dengetal} called the DPP with kernel \eqref{e:Kalphabeta} the scaled
$\beta$-Ginibre point process but the bound $\alpha\beta\le1$ was not noticed. For any fixed value of $\rho>0$, as the value of $\beta$ increases to its maximum $\min\{1,1/(\pi\rho)\}$, the more repulsive the process becomes, whilst as $\beta$ decreases to 0, in the limit a Poisson process with intensity $\rho$ is obtained. %\textcolor{red}{(The later result follows as the kernels converge in the weak operator topology to the Dirac kernel operator, and so Proposition~3.11 in \cite{shirai:takahash:03} implies that their Laplace functionals converge to the Laplace functional of a Poisson point process with intensity $\rho$.)}
  
\subsubsection{DPPs on $\mathbb R^d$ with a stationary kernel}\label{ex:3}
Suppose $\Lambda=\mathbb R^d$, $\nu$ is the Lebesgue measure, and $K(u,v)=K_0(u-v)$ is stationary, where $K_0\in L^2(\mathbb R^d)$ and $K_0$ is continuous. Then it follows from Parseval's identity that $p_u=1$ if and only if $\hat K_0$ is an indicator function whose integral agrees with the intensity of $X$, cf.\ Appendix J in \cite{LMR12extended}. A natural choice for the support of this indicator function is a ball centred at the origin in $\mathbb R^d$, and if (as in the standard Ginibre point process)
we let the intensity be $1/\pi$, then the globally most repulsive DPP has a stationary and isotropic kernel given by
\begin{equation}\label{e:jinc-kernel-general-dim-standard}
%K_{\mathrm{spec}}
K(v,w) =\int_{|y|^d\le d\Gamma(d/2)/(2\pi^{1+d/2})}\exp\left(2\pi i (v-w)\cdot y\right)\,\mathrm dy
, \qquad v, w \in \mathbb R^d,
\end{equation}
where $x\cdot y$ denotes the usual inner product for $x,y\in\mathbb R^d$ and $|y|$ is the usual Euclidean distance. For instance,
for $d=1$ this kernel is the sinc function and for $d=2$ it is the jinc-like function
\begin{equation}\label{e:jinc-kernel-2-dim-standard}
K%_{\mathrm{spec}}
(v,w) = J_1(2|v-w|)/(\pi|v-w|),
\end{equation}
where $J_1$ is the Bessel function of order one. We straightforwardly obtain the following proposition, where 
the moments in \eqref{e:kth-moments} follow from Eq.\ 10.22.57 in \cite{DLMF}. 

\begin{prop}\label{p2}
For the globally most repulsive DPP on $\mathbb R^d$ with kernel given by \eqref{e:jinc-kernel-general-dim-standard} and for any $u \in \mathbb{C}$, we have that $\rho_{u}(v)=\pi|K(u,v)|^2$ is a probability density function. In particular, for $d=2$, 
\[\rho_{u}(v) =  J_1(2|v - u|)^2 /\left(\pi |v-u|^2\right),\qquad  v\in\mathbb R^2,\]
and the moments of $|Z_u-u|$ with $Z_u\sim \rho_u$ %\textcolor{red}{\rho_{\kappa_u}}\textcolor{blue}{\rho_u}$ 
satisfy 
\begin{equation}\label{e:kth-moments}
\mathrm{E}\left(|Z_u - u|^k\right) =  
\frac{\Gamma(1+k/2)\Gamma(1-k)}{\Gamma(2-k/2)\Gamma(1 - k/2)^{2}},
%\beta\left(1 + \tfrac{k}{2}, 1 - k\right) /\,\Gamma\left(1 - \tfrac{k}{2}\right)^{2}, 
\qquad k \in (-2,1),
\end{equation}
and are infinite for $k \geq 1$.
\end{prop}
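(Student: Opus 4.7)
The plan is to establish the three assertions in sequence: first verify that $\rho_u$ is a probability density by invoking the globally-most-repulsive property, then specialize to $d=2$, and finally reduce the moment computation to a Bessel integral tabulated in DLMF 10.22.57.

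For the first claim, evaluating \eqref{e:jinc-kernel-general-dim-standard} at $v = w = u$ gives $K(u,u) = \mathrm{vol}(B)$, where $B \subset \mathbb{R}^d$ is the ball of radius $r$ with $r^d = d\Gamma(d/2)/(2\pi^{1+d/2})$. A short computation using $\mathrm{vol}(B) = \pi^{d/2}r^d/\Gamma(d/2+1)$ together with $\Gamma(d/2+1) = (d/2)\Gamma(d/2)$ yields $K(u,u) = 1/\pi$, independently of $d$. Since $\hat K_0 = 1_B$ is an indicator function whose integral equals the intensity $1/\pi$, the discussion preceding the proposition shows the DPP is globally most repulsive, so $p_u = 1$ and $\rho_u(v) = |K(u,v)|^2/K(u,u) = \pi|K(u,v)|^2$ is a probability density w.r.t.\ Lebesgue measure. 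For $d=2$, substituting \eqref{e:jinc-kernel-2-dim-standard} immediately yields $\rho_u(v) = J_1(2|v-u|)^2/(\pi|v-u|^2)$.

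For the moments with $d=2$, I would pass to polar coordinates centred at $u$, write $r = |v-u|$, integrate out the angular variable, and substitute $s = 2r$, obtaining
\[\mathrm{E}\bigl(|Z_u - u|^k\bigr) = 2\pi \int_0^\infty r^k\,\frac{J_1(2r)^2}{\pi r^2}\,r\,\mathrm dr = 2^{1-k}\int_0^\infty s^{k-1}J_1(s)^2\,\mathrm ds.\]
The remaining integral is a Weber--Schafheitlin integral covered by DLMF 10.22.57 (with $\mu=\nu=1$ and exponent $\lambda = 1-k$), whose convergence condition $|\mu-\nu| < \lambda < \mu+\nu+1$ becomes precisely $k \in (-2,1)$. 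Plugging these parameters into the Gamma-function formula produces a factor $2^k$ in the numerator which, combined with the $\tfrac12$ in the denominator of the DLMF expression, exactly cancels the prefactor $2^{1-k}$ from the change of variables; after writing $\Gamma((2+k)/2) = \Gamma(1+k/2)$, $\Gamma((2-k)/2) = \Gamma(1-k/2)$, and $\Gamma((4-k)/2) = \Gamma(2-k/2)$, one obtains exactly the right-hand side of \eqref{e:kth-moments}.

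For divergence when $k \geq 1$, I would argue directly by non-negativity of the integrand. The large-$s$ asymptotic $J_1(s)^2 \sim (2/(\pi s))\cos^2(s - 3\pi/4)$ combined with $\cos^2(s-3\pi/4) = \tfrac12(1-\sin 2s)$ shows that the integrand eventually exceeds a positive constant times $s^{k-2}(1-\sin 2s)$. The non-oscillatory part contributes $\int^\infty s^{k-2}\,\mathrm ds$, which diverges for $k \geq 1$ (logarithmically at $k=1$, polynomially for $k>1$), while the oscillatory remainder is uniformly bounded, so the full integral is infinite. The only delicate point is bookkeeping with DLMF 10.22.57 -- identifying the correct entry, checking that its stated convergence range translates to $k \in (-2,1)$, and verifying that the numerical prefactors combine cleanly -- but no substantive obstacle is expected.
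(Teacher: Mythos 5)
Your proof is correct and follows the same route the paper intends: the paper dismisses the proposition as something we ``straightforwardly obtain'' and simply cites DLMF~10.22.57 for the moments, and you supply exactly the omitted bookkeeping. All your computations check out: $K(u,u)=\mathrm{vol}(B)=1/\pi$ for every $d$; $\hat K_0=1_B$ with $\int 1_B=1/\pi$ makes the DPP a projection DPP so $p_u=1$ and $\rho_u=\pi|K(u,\cdot)|^2$ integrates to one; the polar-coordinate reduction to $2^{1-k}\int_0^\infty s^{k-1}J_1(s)^2\,\mathrm ds$ is right; with $\mu=\nu=1$, $a=1$, $\lambda=1-k$ the DLMF gamma arguments become $1+k/2$, $1-k/2$, $1-k/2$, $2-k/2$ and the $2^\lambda$ in the denominator cancels the $2^{1-k}$ prefactor to give \eqref{e:kth-moments}; and the convergence window $0<\lambda<\mu+\nu+1$ translates to $k\in(-2,1)$ while the Bessel asymptotics $J_1(s)^2\sim(2/\pi s)\cos^2(s-3\pi/4)$ give divergence for $k\ge1$ because the non-oscillatory part of the integrand is $\gtrsim s^{k-2}$. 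One cosmetic remark: the general Weber--Schafheitlin condition you quote, $|\mu-\nu|<\lambda<\mu+\nu+1$, is not quite the DLMF~10.22.57 hypothesis (which is $0<\Re\lambda<\Re(\mu+\nu+1)$), but for $\mu=\nu=1$ they coincide, so nothing is affected.
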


For comparison consider a standard Ginibre point process, where we can define $Z_u$ in a similar way as in Proposition~\ref{p2}. In both cases, $|Z_u-u|$ is independent of $(Z_u-u)/|Z_u-u|$, which is uniformly distributed on the unit circle. However, the distribution of $|Z_u-u|$ is very different in the two cases:
For the standard Ginibre point process, $|Z_u-u|^2$ is exponentially distributed and $|Z_u-u|$ has a finite $k$-th moment for all $k>-2$ given by $\Gamma(1+k/2)/(\pi\rho)^{k/2}$; whilst for the DPP on $\mathbb R^2$ with jinc-like kernel \eqref{e:jinc-kernel-2-dim-standard}, 
 $|Z_u-u|$ is heavy-tailed and has infinite $k$-th moments for all $k \geq 1$.

For any DPP $X$ with kernel $K$ and defined on $\mathbb R^d$, using independent thinning and scale transformation procedures similar to those in Section~\ref{ex:2} (replacing $\sqrt\beta$ by $\beta^{1/d}$ when transforming the points in the thinned process), we obtain a new DPP with kernel
\[K_{\mathrm{new}}(v,w)=\alpha K(v/\beta^{1/d},w/\beta^{1/d}),\qquad v,w\in\mathbb R^d,\]
where $\beta\in(0,1]$ and $\alpha\in(0,1/\beta]$. 
%The existence of this new DPP follows from the fact that the eigenvalues of $K_{\mathrm{new}}$ restricted to a compact set $S\subset\mathbb R^d$ are given by $\alpha\beta\lambda_k^{S/\beta^{1/d}}\in[0,1]$, where $\lambda_k^{S/\beta^{1/d}}$ is an eigenvalue for $K_{S/\beta^{1/d}}$. 
For instance, 
if $K$ is the jinc-like kernel for the globally most repulsive DPP given by \eqref{e:jinc-kernel-2-dim-standard}, the new DPP satisfies
the same equations for its intensity $\rho$ and its probability $p_u$ as in \eqref{e:d1}. Hence, if $\rho$ and $\beta$ are the same for this new DPP and the scaled $\beta$-Ginibre point process, the two DPPs are equally repulsive in terms of $p_u$. However, the probability density function for the point in $\xi_u$ conditioned on $\xi_u\not=\emptyset$ now becomes
\begin{equation}\label{e:d2}
f_u(v)=J_1\left(2|v-u|^2/\beta\right)/\left(\pi|v-u|^2/\beta\right).
\end{equation} 
The reach of the repulsive effect of the point at $u$ is much different when comparing the densities in \eqref{e:d1} and \eqref{e:d2}, in particular if $\beta$ is large. See Figure \ref{fig:density_comp} for a comparison of the densities \eqref{e:d1} and \eqref{e:d2} for $\beta = 1$.

    \begin{figure}
        \centering
        \includegraphics{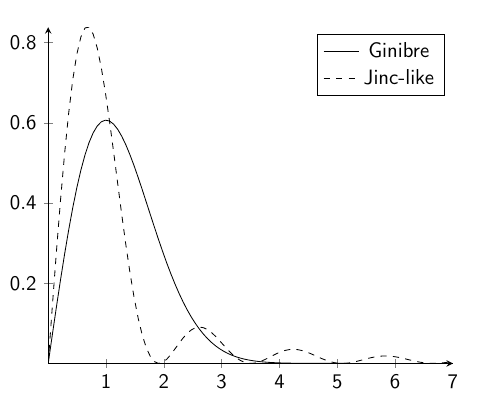}
        \caption{The densities of $|Z_0|$ for $Z_0 := X\backslash X^0$ for two globally most repulsive DPPs.}
        \label{fig:density_comp}
    \end{figure}

%and was first introduced in \cite{LMR15} using a spectral approach. 

\subsubsection{DPPs on $\mathbb S^d$ with an isotropic kernel}\label{ex:4}

Suppose $\Lambda=\mathbb S^d$ is the $d$-dimensional unit sphere, $\nu$ is the Lebesgue measure, and $K(v,w)=K_0(v\cdot w)$ is isotropic for all $v,w\in\mathbb S^d$. Then the DPP with kernel $K$ is isotropic, and $\rho=K_0(1)$ and $p_u$ do not depend on the choice of $u\in\Lambda$.
%, which we may let be the "North pole" $u=(0,...,1)$. 
By a classical result of Schoenberg \cite{MR0005922} and by Theorem~4.1 in \cite{Moelleretal18}, we have the following. The normalized eigenfunctions will be complex spherical harmonic functions, and $K_0$ will be real and of the form
\[K_0(t)=\rho\sum_{\ell=0}^\infty\beta_{\ell,d}\frac{C_\ell^{\left(\frac{d-1}{2}\right)}(t)}{C_\ell^{\left(\frac{d-1}{2}\right)}(1)},\qquad -1\le t\le 1,\]
where $C_\ell^{\left(\frac{d-1}{2}\right)}$ is a Gegenbauer polynomial of degree $\ell$ and the sequence $\beta_{0,d},\beta_{1,d},\ldots$ is a probability mass function. Further, letting $\sigma_d=\nu(\mathbb S^d)=2\pi^{(d+1)/2}/\Gamma((d+1)/2)$, 
the eigenvalues of $K$ are 
\[\lambda_{\ell,d}=\rho\sigma_d\beta_{\ell,d}/m_{\ell,d},\qquad \ell=0,1,\ldots,\]
with multiplicities
\[m_{0,1},\qquad m_{\ell,1}=2,\qquad \ell=1,2,\ldots,\qquad \mbox{if }d=1, \]
and
\[m_{\ell,d}=\frac{2\ell+d-1}{d-1}\frac{(\ell+d-2)!}{\ell!(d-2)!},\qquad \ell=0,1,\ldots,\qquad \mbox{if }d\in\{2,3,\ldots\}.\]
 So the DPP exists if and only if $\rho\le\inf_{\ell:\,\beta_{\ell,d}>0}m_{\ell,d}/(\sigma_d\beta_{\ell,d})$. Now, applying \eqref{e:compact p}, we obtain
\begin{equation}\label{e:gammasphere}
p_u=\rho\sigma_d\sum_{\ell=0}^\infty\beta_{\ell,d}^2/m_{\ell,d}.
\end{equation} 

There is a lack of flexible parametric DPP models on the sphere where $K_0$ is expressible in closed form, see Section~4.3 in \cite{Moelleretal18}. For instance, let $d=2$ and consider the special case of the multiquadric model given by
\[K_0(t)=\rho\frac{1-\delta}{\sqrt{1+\delta^2-2\delta t}},\qquad -1\le t\le 1,\]
with $\delta\in(0,1)$ a parameter and $0<\rho\le 1/(4\pi(1-\delta))$. Then, as shown in Section~4.3.2 in \cite{Moelleretal18}, the sequence
\begin{equation}\label{e:betal2}
\beta_{\ell,2}=(1-\delta)\delta^\ell,\qquad\ell=0,1,\ldots,
\end{equation}
specifies a geometric distribution and
\[\lambda_{\ell,2}= 4\pi\rho\delta^\ell(1-\delta)/(2\ell+1)\le \delta^\ell/(2\ell+1),\qquad\ell=0,1,\ldots.\]
As $\delta\rightarrow0$, then $\lambda_{0,2}\rightarrow4\pi\rho$ and $\lambda_{\ell,2}\rightarrow0$ if $\ell\ge1$, corresponding to the uninteresting case of a DPP with at most one point if $\rho<1/(4\pi)$ and with exactly one point if $\rho=1/(4\pi)$. From \eqref{e:gammasphere} and \eqref{e:betal2} we obtain
\[p_u=4\pi\rho(1-\delta)/(1+\delta)\le1/(1+\delta)
%\rho(4\pi)^2(1-\delta)^2\sum_{\ell=0}^\infty\frac{\delta^{2\ell}}{2\ell+1}=
%\rho\frac{(4\pi)^2(1-\delta)^2}{2\delta}\left(\ln(1+\delta)-\ln(1-\delta)\right)\le 
%\frac{2\pi(1-\delta)}{\delta}\left(\ln|1+\delta|-\ln|1-\delta|\right)
,\]
with this upper bound obtained for 
 the maximal value of $\rho= 1/(4\pi(1-\delta))$.
%\[\gamma=\frac{4\pi(1-\delta)}{2\delta}\left(\ln|1+\delta|-\ln|1-\delta|\right).\] 
Therefore the DPP with the multiquadric kernel is far from being globally most repulsive unless the expected number of points is very small.

Instead a flexible parametric model for the eigenvalues $\lambda_{\ell,d}$ is suggested in Section~4.3.4 in \cite{Moelleretal18} so that globally most repulsive DPPs as well as Poisson processes are obtained as limiting cases. However, the disadvantage of that model is that we can only numerically calculate $\rho$ and $p_u$.

\subsubsection{Remark}\label{s:rem} The considerations in Sections~\ref{s:measure of repulsiveness} and \ref{ex:1}-\ref{ex:4} 
are strictly for DPPs. For example, the intensity function of a Gibbs point process can be both smaller and larger than the intensity function of its Palm distribution at a given point; 
whilst for a DPP, $\rho\ge\rho^u$. 
Furthermore,
as a candidate for a 
`globally most repulsive stationary Gibbs point process on $\mathbb R^2$', we may
consider $Y=L_Z:=\{x+Z:x\in L\}$, where $L$ is the vertex set of a regular triangular lattice (the centres of a honeycomb structure) with one lattice point at the origin, and where $Z$ is a uniformly distributed point in the hexagonal region given by the Voronoi cell of the lattice and centred at the origin
(in other words, $Y$ may be considered as the limit of a stationary Gibbs hard core process when the packing fraction of hard discs increases to the maximal value $\approx0.907$, see e.g.\ \cite{Dogeetal04,Maseetal01}). However, the reduced Palm process at $u\in\mathbb R^2$ will be degenerated and given by $Y^u=L_u\setminus\{u\}$, which is a much different situation as compared to DPPs.

\Acks
%Jesper M\o ller was supported in part by The Danish Council for Independent Research | Natural Sciences, grant 7014-00074B, "Statistics for point processes in space and beyond", and 
%by the "Centre for Stochastic Geometry and Advanced Bioimaging",
%funded by grant 8721 from the Villum Foundation. Eliza O'reilly was supported in part by a grant of the Simons Foundation (\#197982 to UT Austin), the National Science Foundation Graduate Research Fellowship under Grant No. DGE-1110007, and the Department of Mathematical Sciences, Aalborg University.
% We are grateful to Professor Russell Lyon for helpful comments and discussions.
 Jesper M\o ller was supported in part by The Danish Council for Independent Research | Natural Sciences, grant 7014-00074B, `Statistics for point processes in space and beyond', and 
by the `Centre for Stochastic Geometry and Advanced Bioimaging',
funded by grant 8721 from the Villum Foundation. Eliza O'Reilly was supported in part by a grant of the Simons Foundation (\#197982 to UT Austin), the National Science Foundation Graduate Research Fellowship under Grant No. DGE-1110007, and the Department of Mathematical Sciences, Aalborg University.
% \textcolor{red}{We are grateful to Professor Russell Lyons for helpful comments and discussions.} 
%\textcolor{blue}{
We are grateful to Professor Russell Lyons for many useful comments, in particular for changing our focus on a more complicated coupling result established
in an earlier version of our paper (briefly, to obtain the reduced Palm process,
one point in the DPP was either moved or removed) to the simpler coupling
result in Theorem \ref{t:main-result}, which indeed is more suited for studying repulsiveness in
DPPs. Also, he pointed our attention to his paper \cite{Lyons:14}, which is essential in the
proof of Theorem \ref{t:main-result}. Finally, we are grateful to two referees, in particular for pointing our attention to the work by Pemantle and Peres.%}

\bibliographystyle{apt}
\bibliography{references}
\end{document}